\title{Variance bounds for disc-polygons}
\author[F. Fodor]{Ferenc Fodor}
\address{Department of Geometry, Bolyai Institute, University of Szeged, Aradi v\'ertan\'uk tere 1, 6720 Szeged, Hungary}
\email{fodorf@math.u-szeged.hu}
\author[B. Gr\"unfelder]{Bal\'azs Gr\"unfelder}
\address{University of Szeged, Aradi v\'ertan\'uk tere 1, 6720 Szeged, Hungary}
\email{grunfelder.balazs@stud.u-szeged.hu}
\author[V. V\'igh]{Viktor V\'\i gh}
\address{Department of Geometry, Bolyai Institute, University of Szeged, Aradi v\'ertan\'uk tere 1, 6720 Szeged, Hungary}
\email{vigvik@math.u-szeged.hu}
\newcommand{\R}{\mathbb{R}}
\newcommand{\N}{\mathbb{N}}
\newcommand{\Ex}{\mathbb{E}}
\newcommand{\F}{\mathcal{F}}
\newtheorem{theorem}{Theorem}
\newtheorem{lemma}{Lemma}
\newcommand{\conv}{{\rm conv }}
\newcommand{\pr}{\mathbb P}
\newcommand{\indi}{\mathbbm{1}}
\DeclareMathOperator{\Var}{Var}
\DeclareMathOperator{\inti}{int}
\DeclareMathOperator{\Vol}{Vol}
\date{\today}
\begin{document}

\begin{abstract}
	We prove asymptotic lower bounds on the variance
	of the number of vertices and missed area of random disc-polygons in convex discs whose boundary is $C_+^2$ smooth. The established lower bounds are of the same order as the upper bounds proved previously in \cite{FV18}.
\end{abstract}

\keywords{Disc-polygons, random approximation, variance, asymptotic lower bounds}
\subjclass[2010]{52A22, 60D05}

\maketitle
\bibliographystyle{amsplain}

\section{Results}
We work in the Euclidean plane $\R^2$ with origin centred closed unit ball $B=B^2$ whose boundary is the unit circle $S^1=\partial B$. We denote the (origin centred) open unit ball by $B^\circ$. We denote by $A(\cdot)$ the area of measurable sets in $\R^2$. For general information about convex sets we refer to the books by Gruber \cite{G07} and Schneider \cite{Sch14}. 

For asymptotic inequalities, we use the following common notation: for two real sequences $f,g$, we write $f\ll g$ if there is a positive constant $\gamma$ such that $|f(n)|\leq\gamma g(n)$ for every $n\in\N$.

Let $K\subset\R^2$ be a convex disc with $C^2_+$ smooth boundary (twice continuously differentiable with positive curvature $\kappa(x)>0$ at every point $x\in\partial K$). Let $r_M=1/\kappa_m$, where $\kappa_m=\min_{x\in\partial K} \kappa (x)>0$. It is known (see \cite[Theorem~3.2.12 on p. 164]{Sch14}) that $K$ slides freely in a circle of radius $r_M$, that is, for every $x\in\partial K$, there exists a $v\in\R^2$ with $x\in r_MS^1+v$ and $K\subset r_MS^1+v$ (cf. \cite[p. 156]{Sch14}). For $r\geq r_M$ and $X\subset K$, let $\conv_r (X)$ denote the intersection of all radius $r$ closed circular discs that contain $X$, that is,
$$\conv_r(X):=\bigcap_{\overset{v\in\R^2,}{X\subset rB+v}} (rB+v).$$
The set $\conv_r(X)$ is called the closed $r$-spindle convex hull or $r$-hull of $X$. It is known that $\conv_r(X)\subset K$, see Bezdek et al \cite{BLNP05}. For more information about the geometric properties of the $r$-spindle convex hull see, for example, \cite{BLNP05} and \cite{FKV14} and the references therein.

We investigate the following probability model. Let $X_n=\{x_1, \ldots, x_n\}\subset K$ be a sample of $n$  i.i.d. random points selected according to the uniform probability distribution. Let $K_n^r:=\conv_r(X_n)$.
The set $K_n^r$ is a {\em (uniform) random convex $r$-disc-polygon in $K$} whose sides are arcs of radius $r$ circles. The edges, vertices and angles of $K_n^r$ are defined the usual way.  Let $f_0(K_n^r)$ denote the number of vertices of $K_n^r$. We call $A(K\setminus K_n^r)$ the missed area of $K_n^r$. 

If $\overline{K}_n$ denotes the (usual) convex hull of $X_n$, then $\overline{K}_n\subset K_n^r\subset K$ for all $r\geq r_M$. Since the containment $\overline{K}_n\subset K_n^r$ is strict, $K_n^r$ approximates $K$ better than $K_n$ from the point of view of area and perimeter. It is also clear that for fixed $X_n$, the $r$-disc-polygons $K_n^r$ tend to $\overline{K}_n$ in the Hausdorff-metric as $r\to\infty$. Furthermore, $f_0(K_n^r)\leq f_0(\overline{K}_n)$ for all $r\geq r_M$. 

The geometric properties of the (classical) random polygon $\overline{K}_n$ have been investigated extensively. Starting with the seminal papers of R\'enyi and Sulanke \cites{RS63, RS64, RS68} the asymptotic behaviour of the expected number of vertices, area and perimeter have been determined. For a detailed overview of known results about the classical model, see for example, the surveys \cites{BBS06, B08, R10} and \cite{Sch17}.

Fodor, Kevei and V\'igh \cite[Theorem~1.1 on p. 901]{FKV14} proved the following asymptotic formulas: for $r>r_M$, it holds that
\begin{align}
\lim_{n\to\infty} \Ex f_0(K_n^r)\cdot n^{-1/3}&=\sqrt[3]{\frac{2}{3A(K)}}\Gamma\left (\frac 53\right )c_1(K,r),\label{vertexno}\\
\lim_{n\to\infty} \Ex A(K\setminus K_n^r)\cdot n^{2/3}&=\sqrt[3]{\frac{2A^2(K)}{3}}\Gamma\left (\frac 53\right )c_1(K,r),\label{area}
\end{align}
where the constant
$$c_1(K,r)=\int_{\partial K}\left (\kappa(x)-\frac 1r\right )^{1/3}\, dx,$$
seems to resemble to the affine arc-length although it is unclear what is its exact geometric meaning. The formulas \eqref{vertexno} and \eqref{area} are generalizations of the corresponding results of R\'enyi and Sulanke for the classical case, see Section~3 of \cite{FKV14}. We also note that in \eqref{vertexno} and \eqref{area} the condition that $r>r_M$ is important. If $K$ is a disc-polygon of radius $r$ itself, then the order of magnitude of $\Ex f_0(K_n^r)$ and $\Ex A(K\setminus K_n^r)$ are different, see \cite{FKV21+}.


In the case when $K=B$, it is proved in \cite[Theorem~1.3 on p. 902]{FKV14} that for $r=1$, it holds that
\begin{align*}
\lim_{n\to\infty} \Ex f_0(B^1_n)&=\frac{\pi^2}{2},\\
\lim_{n\to\infty} \Ex A(B\setminus B^1_n)\cdot n&=\frac{\pi^3}{2}.
\end{align*}

Significantly less is known about the higher moments of these quantities. 

In the classical case,  Reitzner \cite{R03, R05a} proved the following upper bounds for the variance of the number of $j$-dimensional faces and the volume of the random polytopes in smooth convex bodies in dimension $d$:  
\begin{align*}
	\Var(\Vol (\overline{K}_n))&\ll n^{-(d+3)/(d+1)},\\
	\Var(f_j(\overline{K}_n))&\ll n^{(d-1)/(d+1)}.
\end{align*}
Here  $\overline{K}_n$ denotes the (classical) convex hull of $n$ i.i.d. random points selected from the $d$-dimensional convex body $K\subset\R^d$ with $C^2_+$ smooth boundary. 
The symbol $\Vol(\cdot)$ denotes the volume of Lebesgue measurable sets in $\R^d$ and $f_j(\cdot)$ is the number of $j$-dimensional faces. The implied constants depend only on $K$ and the dimension. The upper bounds also imply strong laws of large numbers for these quantities. 

Reitzner \cite{R05b} gave matching lower bounds for the variance in case $K$ is smooth:
\begin{align*}
	\Var(\Vol(\overline{K}_n))&\gg n^{-(d+3)/(d+1)},\\
	\Var(f_j(\overline{K}_n))&\gg n^{(d-1)/(d+1)}. 
\end{align*}
Using these lower bounds Reitzner \cite{R05b} established central limit theorems for the number of $j$-dimensional faces and the volume for smooth convex bodies. 

Upper and lower bounds, laws of large numbers and central limit theorems were extended for the case when $K\subset \R^d$ is a polytope by B\'ar\'any and Reitzner \cite{BR10}. B\'ar\'any and Steiger \cite{BS13} proved asymptotic upper bounds and strong laws for the missed area and the number of vertices in the classical random model for arbitrary convex discs in $\R^2$ without any smoothness condition.  

Fodor and V\'\i gh \cite{FV18} proved asymptotic upper bounds for the variance of the vertex number and the missed area of uniform random disc-polygons in $C^2_+$ smooth convex discs: For any $r>r_M$ it holds that
\begin{align}
	\Var (f_0(K_n^r))&\ll n^{1/3}\label{disc-vertex-upper},\\
	\Var(A(K_n^r))&\ll n^{-5/3}\label{disc-area-upper},
\end{align}	
where the implied constants depend only on $K$ and $r$. 
In the case when $K=B^2$, they proved \cite{FV18} that
\begin{align}
	\Var (f_0(K_n^r))&\approx 1,\label{circle}\\
	\Var(A(K_n^r))&\ll n^{-2}.
\end{align}
where the implied constants are universal.
Note that the lower bound in \eqref{circle} follows from the fact that the expected number of vertices $\Ex f_0(K_n)$ is a non-integer constant. Formulas \eqref{disc-vertex-upper}, \eqref{disc-area-upper} and \eqref{circle} imply laws of large number for the corresponding quantities, see \cite{FV18}.  

In this paper we prove matching lower bounds for the variance of the vertex number and the missed area for the case when $r>r_M$. Our main results are summarized in the following theorem.

\begin{theorem}\label{thm:also becsles}
Let $K$ be a convex disc whose boundary is 
of class $C^2_+$. For any $r>r_M$ it holds that
\begin{align}
\label{vertex-variance-a}
\Var (f_0(K_n^r))&\gg n^{\frac 13},\\
\label{area-variance-a}
\Var (A(K_n^r))&\gg n^{-\frac 53},
\end{align}
where the implied constants depend only on $K$ and $r$.  
\end{theorem}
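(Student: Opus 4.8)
The plan is to adapt the independent-caps strategy that Reitzner used for classical random polytopes in \cite{R05b} to the $r$-hull setting, using the law of total variance to reduce the global bound to a sum of many local, essentially independent fluctuations. Since $A(K\setminus K_n^r)=A(K)-A(K_n^r)$ and $A(K)$ is a constant, it suffices to bound $\Var(f_0(K_n^r))$ and $\Var(A(K_n^r))$ directly, and both are handled by the same scheme with different local estimates. First I would construct the caps: using an affine-type arc-length measure on $\partial K$, choose $m\asymp n^{1/3}$ boundary points $p_1,\dots,p_m$ and associate to each a cap $C_i=K\cap H_i$ cut off by a chord at distance $\asymp n^{-2/3}$ from $p_i$. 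Such a cap has width $\asymp n^{-1/3}$ and area $\asymp n^{-1}$, hence contains $\Theta(1)$ of the $n$ sample points in expectation, and the $m$ caps together capture the $\asymp n^{1/3}$ points within distance $n^{-2/3}$ of $\partial K$, which are the ones that can become $r$-vertices. The $C^2_+$ assumption, via the free rolling of $K$ in a disc of radius $r_M<r$, guarantees that these caps can be taken pairwise disjoint and moreover separated by a controlled gap; this separation is what decouples distinct caps after conditioning.

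The probabilistic core is a conditioning argument. Let $\mathcal F$ be the $\sigma$-algebra generated by the positions of all sample points lying outside $\bigcup_i C_i$ together with the number $k_i$ of points falling in each $C_i$. Conditionally on $\mathcal F$, the points inside the caps are independent across $i$ (each block being $k_i$ i.i.d. uniform points in $C_i$), by the standard conditional independence of a multinomial allocation. If $V_i$ denotes the number of vertices of $K_n^r$ produced in a neighbourhood of $C_i$, the key geometric fact to establish is locality: whether a point near $p_i$ is an $r$-vertex is decided by an empty circumscribed disc of radius $r$, and by choosing the inter-cap gap large enough this empty-disc condition never reaches a neighbouring cap, so $V_i$ is $\mathcal F$-conditionally a function of the $i$-th block alone. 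Granting this, conditional independence gives $\Var\big(\sum_i V_i\mid\mathcal F\big)=\sum_i\Var(V_i\mid\mathcal F)$, and the law of total variance yields
\[
\Var(f_0(K_n^r))\ \geq\ \Ex\big[\Var(f_0(K_n^r)\mid\mathcal F)\big]\ \geq\ \sum_{i=1}^m \Ex\big[\Var(V_i\mid\mathcal F)\big]\ -\ (\text{error}),
\]
where the error accounts for vertices not captured by the $V_i$ and must be shown to be negligible.

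It then remains to bound each term from below by a constant on a positive-probability event. On the good event that $C_i$ is relevant (its chord meets $K_n^r$) and both the outside configuration and the count $k_i$ are typical, I would exhibit an explicit configuration inside $C_i$ for which the number of $r$-vertices near $p_i$ takes two distinct values according to a sub-region of positive conditional probability; this forces $\Ex[\Var(V_i\mid\mathcal F)]\geq c>0$ for a positive proportion of the caps, and summing over the $m\asymp n^{1/3}$ caps produces $\Var(f_0(K_n^r))\gg n^{1/3}$. For the area one runs the identical argument with $V_i$ replaced by the missed area contributed by $C_i$: the relevant fluctuation is $\asymp n^{-1}$ per cap, so each term contributes $\asymp n^{-2}$ and the sum gives $\Var(A(K_n^r))\gg n^{1/3}\cdot n^{-2}=n^{-5/3}$. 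The main obstacle is the locality step: one must prove rigorously, using the structure of the $r$-hull and the empty-disc characterisation of $r$-vertices, that each cap's contribution depends only on points in a bounded neighbourhood and that the caps can be separated enough to decouple while still keeping $\asymp n^{1/3}$ of them. Quantifying this separation, and controlling the error term in the displayed inequality, is the most delicate part of the argument.
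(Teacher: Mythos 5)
Your overall strategy coincides with the paper's: Reitzner-style caps of height $n^{-2/3}$ at $m\asymp n^{1/3}$ well-separated boundary points, a conditioning argument via the law of total variance, and a per-cap fluctuation of order $1$ for the vertex count and of order $n^{-1}$ (hence per-cap variance of order $n^{-2}$) for the area. However, two of your steps have genuine gaps. First, your locality mechanism is wrong as stated: you claim that ``by choosing the inter-cap gap large enough this empty-disc condition never reaches a neighbouring cap.'' The witnessing discs have radius $r$, which is of constant order, while the caps and the gaps between them have diameter of order $n^{-1/3}$; every admissible disc of radius $r$ through a point of one cap sweeps across all neighbouring caps, so no amount of separation on the scale $n^{-1/3}$ can by itself decouple the caps. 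Locality is not a consequence of separation; it must be forced by the configuration of the sample \emph{inside} each cap. Second, the inequality $\Var(f_0(K_n^r)\mid\mathcal F)\geq\sum_i\Var(V_i\mid\mathcal F)-(\text{error})$ is not valid as written: if $f_0=\sum_i V_i+W$, the conditional covariances between $W$ and the $V_i$ (and among the $V_i$ whenever locality fails) enter with arbitrary signs, and nothing in your sketch shows they are dominated by the main term. Since the whole point is a lower bound, an uncontrolled error term of potentially the same order as the main term is fatal.

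The paper resolves both problems at once by a different choice of the conditioning $\sigma$-algebra. Instead of conditioning on the outside configuration and occupation numbers, it conditions on the events $E_j$ that a specific \emph{pinning} configuration occurs in the $j$-th cap: exactly one sample point in each of three small triangles $\Delta_0,\Delta_1,\Delta_2$ (one near the cap's vertex, two near its ends close to $\partial K$), and no other sample point in the enlarged cap $D(y_j,c_2t_n)$. On such an event the hull edges emanating from the point $Z_j\in\Delta_0$ necessarily go to the two points in $\Delta_1$ and $\Delta_2$, and the geometric core of the proof (Lemma~\ref{fuggetlen}, proved by trapping $\partial K$ between circles of radii $R_0\pm\e$ at the osculating point) shows that the corresponding radius-$1$ arcs exit through $\partial K$ at depth $O(t_n)$, hence without touching any other cap. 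This yields an \emph{exact} decomposition $A(K_n)=A\left(\conv_r\left(X_n\setminus\{Z_j\}_{j\in J}\right)\right)+\sum_{j\in J}\hat{A}(Z_j)$ with no cross terms: after further fixing all points other than the $Z_j$, the summands are independent, and the law of total variance gives $\Var(A(K_n))\geq\Ex\sum_{j}\Var_{Z_j}(\hat{A}(Z_j)\mid E_j)\indi(E_j)$ directly, with no error term to control. The per-cap bound $\Var_{Z_j}(\hat{A}(Z_j)\mid E_j)\gg t_n^3$ (Lemma~\ref{sapka-szorasok}) and $\pr(E_j)\gg1$ then finish the proof, and the vertex-number case is handled identically with two points in $\Delta_0$. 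If you want to salvage your scheme, replace your $\sigma$-algebra by one generated by pinning events of this kind; that is precisely the missing idea.
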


We note that Theorem~\ref{thm:also becsles} is contained in the Master thesis of B. Gr\"unfelder \cite{G21}, and also in his Hungarian OTDK student competition paper.  

We achieve these results using a modified version of the method of Reitzner \cite{R05a} which had already been used in adapted forms in several different settings, see, for example, \cite{BFRV09, TW18, TTW18}. In the disc-polygonal setting, the difficulty lies in the intrinsic geometry of the model.

The lower bounds in Theorem~\ref{thm:also becsles} may open the road towards quantitative central limit theorems, similarly as in \cites{TTW18, TW18, BRT21+} using normal approximation bounds from Stein's method.

The layout of the paper is the following: in Section~\ref{preparations} we collect some necessary preparatory material. Section~\ref{proof-area} contains the proof of \eqref{area-variance-a} and Section~\ref{proof-vertex} contains the outlines of the changes necessary for the proof of \eqref{vertex-variance-a}. 

\section{Preparations}\label{preparations}
Without loss of generality, we may assume that $r=1$ and prove Theorem~\ref{thm:also becsles} for the case when $r_M<1$, since the general statements follow by a scaling argument.
For simplicity, we write $K_n$ for $K_n^1$.

For $p\in\R^2$, the set $K\setminus (B^\circ+p)$ is called a {\em disc-cap (of radius $1$)} of $K$.
We use the notations from \cite{FKV14}.
Let $x$ and $y$ be two points in $K$. 
The two unit circles that pass through these points, determine two disc-caps of $K$, denoted by $D_-(x,y)$ and $D_+(x,y)$, respectively, such that $A(D_-(x,y))\leq A(D_+(x,y))$.
Briefly, we write $A_-(x,y)=A(D_-(x,y))$ and $A_+(x,y)=A(D_+(x,y))$.
Lemma~4.3 in \cite{FKV14} states that if the boundary of $K$ is of class $C^2_+$ ($r_M<1$), then there exists a $\delta>0$  (depending only on $K$) with the property that for any $x, y\in\inti K$ it holds that $A_+(x,y)>\delta$.

We need some further technical statements about general disc-caps. 
Denote the (unique) outer unit normal to $K$ at the boundary point $x$ by $u_x\in S^1$, and the unique boundary point with outer unit normal $u\in S^1$ by $x_u\in \partial K$. It is proved in Lemma~4.1 of \cite{FKV14} that if $K$ is a convex disc with $C^2_{{+}}$ boundary and $\kappa_m>1$, and
$D=K\setminus (B^\circ+p)$ is a non-empty disc-cap, then there exists a unique point $x_0\in \partial K \cap \partial D$ such that $p=x_0-(1+t) u_{x_0}$ for some $t\geq 0$. The point $x_0$ is the {\em vertex} and the number $t$ is the {\em height of $D$}.


Let us denote the disc-cap with vertex $x_u\in\partial K$ and height $t$ by $D(u,t)$.
To simplify the notation, we write $A(u,t)=A(D(u,t))$, and let $\ell(u,t)$ denote the arc-length of $\partial D(u,t)\cap (\partial B+x_u-(1+t)u)$. 
The latter also exists since for each $u\in S^1$, there exists a maximal positive constant $t^*(u)$ such that $(B+x_u-(1+t)u)\cap K\neq\emptyset$ for all $t\in [0, t^*(u)]$.

The following limit relations for $A(u,t)$ and $\ell(u,t)$ are proved (in a more precise form) in \cite[p. 905, Lemma 4.2]{FKV14}:
\begin{equation}\label{Vandellestimate2}
	\ell(u_x,t) \approx t^{1/2}, \quad\quad
	A(u_x,t)\approx t^{3/2},
\end{equation}
as $t\to 0^+$, where the implied constants depend only on $K$.

Let $D$ be a disc-cap of $K$ with vertex $x$. 
For a line $e\subset\R^2$ perpendicular to $u_x$, let $e_+$ denote the closed half plane that contains $x$. 
Then there exists a maximal cap $C_-(D)=K\cap e_+$ that is fully contained in $D$, and a minimal cap $C_+(D)=e'_+\cap K$ containing $D$.
We recall \cite[Claim~1 on p. 1146]{FV18}, that gives a relation between classical caps and disc-caps, as follows: There exists a constant $\hat c$ depending only on $K$ such that if the height of the disc-cap $D$ is sufficiently small, then 
\begin{equation}\label{sapkatart}
	C_-(D)-x\supset \hat c (C_+(D)-x).
\end{equation}
The relation \eqref{sapkatart} means that the area of a disc-cap can be bounded by two classical caps such that one of them is an enlarged image of the other one by a constant.

Let $x_i,x_j$ ($i\neq j$) be two points of $X_n$, and let $B(x_i,x_j)$ be one of the unit discs containing $x_i$ and $x_j$ on its boundary. 
The shorter arc of $\partial B(x_i,x_j)$ forms an edge of $K_n$ if the entire set $X_n$ is contained in $B(x_i,x_j)$. 
It may happen that the pair $x_i,x_j$ determines two edges of $K_n$ if the above condition holds for both unit discs that contain $x_i$ and $x_j$ on its boundary.

\section{Proof of \eqref{area-variance-a} in Theorem~3}\label{proof-area}
The proof is based on the ideas of Reitzner \cite{R05a}: we give small (disc-)caps which contribute to the variance geometrically independently and show that the variance in these caps is already sufficiently large.

For every $x\in\partial K$ and $t\in(0,1)$ consider the disc-cap $D(x,t)$ of vertex $x$ and height $t$.
Let the Euclidean cap of vertex $x$ and height $t$ be $C(x,t)$.
Let the line cutting off the cap $C(x,t)$ be $H(x,t)$.
Clearly, $D(x,t)\supset C(x,t)$.

In the following we use large values of $n$, thus by an inequality of type $\ll$ we always assume that $t$ is sufficiently small.

Denote the intersections of $\partial K$ and the line $H(x,t)$ by $w_1$ and $w_2$, and let $w_0=x$.
For the triangle $\Delta=\left[w_0,w_1,w_2\right]$ we have
\begin{equation*}
    \Delta\subset C(x,t)\subset D(x,t).
\end{equation*}

Let us define for $j=0,1,2$ the small triangles
\begin{equation*}
    \Delta_j=\Delta_j(x,t)=w_j+\frac1{20}(\left[w_0,w_1,w_2\right]-w_j),
\end{equation*}
i.e. we shrink the $\Delta$ from each of its vertices by a factor of $1/20$.
It follows from \eqref{sapkatart} that $A(\Delta_j(x,t))\approx t^{\frac32}$, since the order of magnitude of the height of triangle $\Delta$ is $t$ and of its base is $\sqrt{t}$.

Let $x$, $t$ and the points $z_1\in \Delta_1(x,t)$ and $z_2\in \Delta_2(x,t)$ be fixed. For $z_0\in \Delta _0(x,t)$ let $\hat{A}(z_0)$ denote the area of the non-convex triangular region $\widetilde{\Delta}(z_0)$ we obtain by joining $z_0$ with $z_1$ and $z_2$ by circular arcs of radius $1$ that are outside of the triangle $z_0z_1z_2$, and also joining $z_1$ and $z_2$ such that the arc intersects the interior of $z_0z_1z_2$.


  \begin{figure}[h]
	\centering
	\includegraphics[scale=.15]{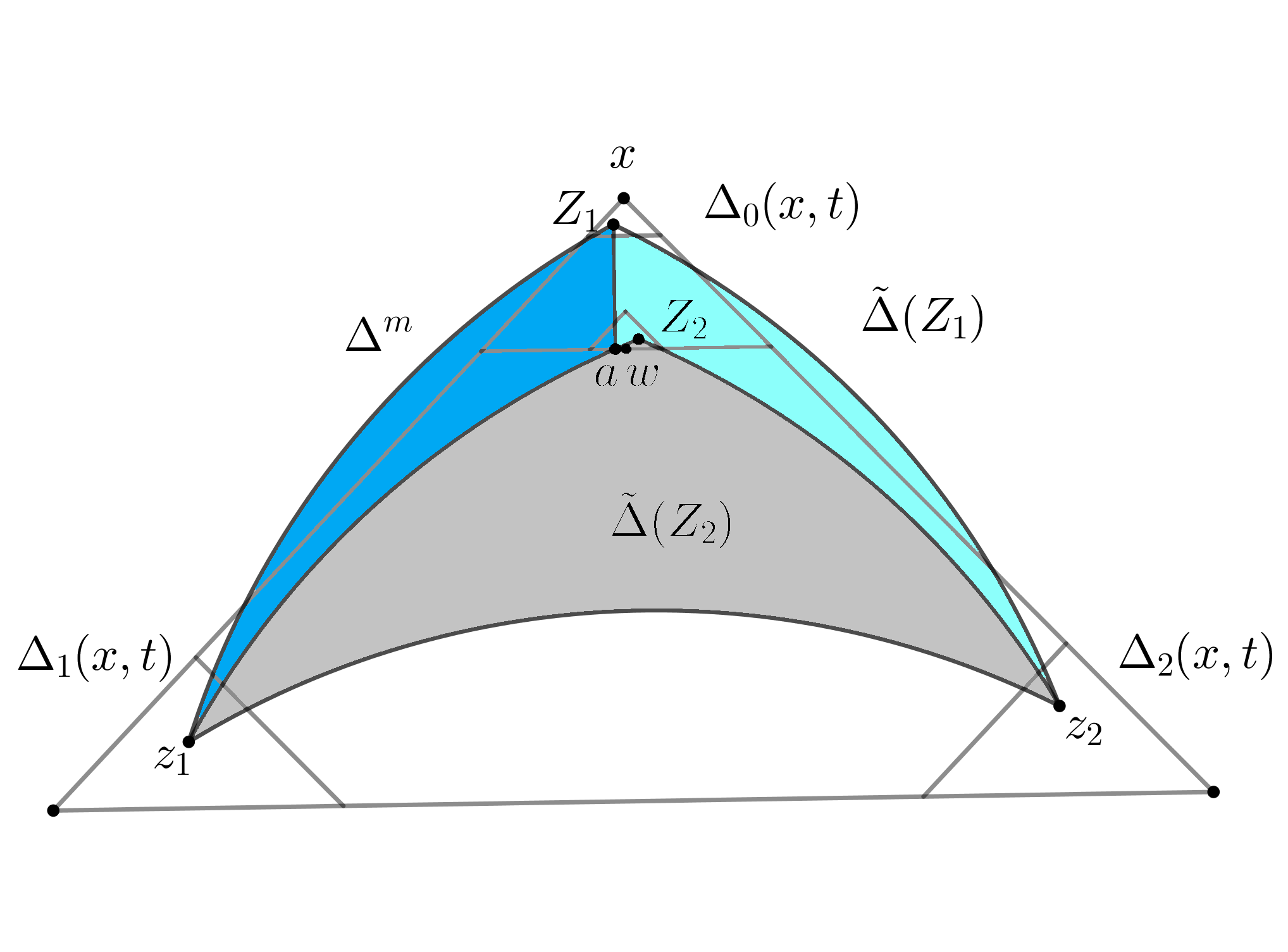}
	\caption{Splitting $\Delta^m$}\label{abra:kulonbseg}
\end{figure}

\begin{lemma}\label{sapka-szorasok}
    Let $Z$ be a uniform random point in $\Delta_0(x,t)$. Then
    \begin{equation*}
        \Var(\hat{A}(Z))\gg t^3.
    \end{equation*}
\end{lemma}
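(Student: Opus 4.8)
The plan is to write $\hat A(z_0)$ as an explicit sum of elementary area pieces and then to exhibit a single direction along which \emph{all} of them increase, so that no cancellation can occur. Fix $z_1\in\Delta_1(x,t)$ and $z_2\in\Delta_2(x,t)$, and decompose the region $\widetilde\Delta(z_0)$ into the triangle $[z_0,z_1,z_2]$, the two unit-circle segments that bulge outward along the edges $z_0z_1$ and $z_0z_2$, and the segment cut off inward along $z_1z_2$. Writing $T(z_0)=A([z_0,z_1,z_2])$ and letting $S(\ell)$ denote the area of a unit-circle segment over a chord of length $\ell$, this gives
\begin{equation*}
\hat A(z_0)=T(z_0)+S(|z_0z_1|)+S(|z_0z_2|)-S(|z_1z_2|).
\end{equation*}
The last term is independent of $z_0$ and so is irrelevant for the variance. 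The function $S$ is smooth and strictly increasing, with $S(\ell)\approx\ell^3$ as $\ell\to0^+$; this is all I will need about it.

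Next I would choose coordinates with $x=w_0$ at the origin and the inner normal $-u_x$ pointing upward, writing $z_0=(u,v)$. Since $\partial K$ is $C^2_+$, the base chord satisfies $|z_1z_2|\approx|w_1w_2|\approx t^{1/2}$, while $\Delta_0(x,t)$ is a triangle of horizontal width $\approx t^{1/2}$ and vertical height $\approx t$ with apex at the origin, lying above the segment $z_1z_2$. I would then bound the derivative of $\hat A$ in the \emph{vertical} direction. Moving $z_0$ upward increases the height of $[z_0,z_1,z_2]$ over the fixed base $z_1z_2$, so $\partial_v T=\tfrac12|z_1z_2|\cos\beta\geq c_0\,t^{1/2}$, where $\beta$ is the small angle between $e_2$ and the normal of the line $z_1z_2$ (so $\cos\beta\to1$). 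Moving $z_0$ upward also lengthens both chords $|z_0z_1|$ and $|z_0z_2|$, because $z_0$ lies above $z_1,z_2$; since $S'>0$, these two segment terms contribute nonnegatively. Hence $\partial_v\hat A(z_0)\geq c\,t^{1/2}$ for a constant $c>0$, uniformly over $z_0\in\Delta_0(x,t)$ and over all admissible $z_1,z_2$.

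Finally I would turn this uniform derivative bound into the variance estimate by slicing and the law of total variance. Let $U,V$ be the horizontal and vertical coordinates of $Z$. Conditioned on $U$, the coordinate $V$ is uniform on a vertical chord of $\Delta_0(x,t)$ of length $L(U)$, and along this chord $v\mapsto\hat A(u,v)$ is increasing with derivative $\geq ct^{1/2}$. Using $\Var(g(V))=\tfrac12\,\Ex(g(V)-g(V'))^2$ for an independent copy $V'$ together with the mean value theorem yields $\Var(\hat A(Z)\mid U)\geq c^2 t\,\Var(V\mid U)=c^2t\,L(U)^2/12$, so that
\begin{equation*}
\Var(\hat A(Z))\geq\frac{c^2 t}{12}\,\Ex\!\left[L(U)^2\right].
\end{equation*}
Since $U$ has density proportional to $L$, one has $\Ex[L(U)^2]=\int L^3\,/\int L$, and a direct computation for the tent-shaped slice-length function of the triangle $\Delta_0(x,t)$ (of height $\approx t$) gives $\Ex[L(U)^2]\gg t^2$, whence $\Var(\hat A(Z))\gg t^3$.

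The main obstacle is the threat of cancellation: each of the three $z_0$-dependent pieces of $\hat A$ is itself of order $t^{3/2}$ and oscillates by order $t^{3/2}$ across $\Delta_0(x,t)$, so a mere bound on $|\nabla\hat A|$ could be consistent with a variance far below $t^3$. The point that removes this danger is the choice of the vertical direction, along which the triangle term and both segment terms are simultaneously increasing, so their derivatives \emph{add} rather than compete; the lower bound then already comes from the triangle term alone. The remaining care is to keep every implied constant uniform as $t\to0^+$ and over $x\in\partial K$ and $z_1,z_2$, which follows from compactness of $\partial K$ and the two-sided curvature bound $0<\kappa_m\le\kappa(\cdot)\le\max_{\partial K}\kappa<\infty$.
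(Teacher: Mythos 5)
Your proof is correct, but it takes a genuinely different route from the paper's. Both arguments ultimately rest on the same geometric fact --- $\hat{A}$ grows monotonically as $z_0$ moves along the normal direction away from the chord $z_1z_2$ --- but they quantify and exploit it differently. The paper works with a discretized version: it places two sub-triangles $\Delta_0^{(1)},\Delta_0^{(2)}$ inside $\Delta_0$ (one at the apex $x$, one at the midpoint of the opposite side), proves the containment $\widetilde{\Delta}(Z_1)\supset\widetilde{\Delta}(Z_2)$ and bounds the area difference below by $t^{3/2}$ via a sector-type argument (law of sines for the angle between the arcs, then a circular sector of radius $\approx\sqrt{t}$ and angle $\gg\sqrt{t}$), and finally applies $\Var(X)=\tfrac12\Ex\left[(X-X')^2\right]$ restricted to the event that the two independent copies land in the two sub-triangles. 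You instead write $\hat{A}$ explicitly as triangle plus two outward unit-circle segments minus a fixed inward one, get a uniform directional derivative bound $\gg t^{1/2}$ (coming from the triangle term alone, the segment terms only helping since $S'>0$), and convert it into a variance bound by conditioning on the horizontal coordinate and computing $\Ex\left[L(U)^2\right]\gg t^2$. Your version replaces the paper's delicate arc-versus-chord comparison by elementary calculus on an explicit formula; the paper's version never needs the formula for $\hat{A}$ or the smoothness of the segment-area function. Two small points: with your stated coordinates (inner normal up, apex at the origin) the triangle $\Delta_0$ lies \emph{below} the segment $z_1z_2$ (points of $\Delta_1,\Delta_2$ have heights at least $19t/20$, points of $\Delta_0$ at most $t/20$), so the monotone direction is downward, i.e.\ the direction of $u_x$; this is purely a labeling slip, since the triangle height and both chords still increase simultaneously in that direction. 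Also, your decomposition tacitly uses that the inward arc over $z_1z_2$ (of sagitta $\approx|z_1z_2|^2/8$, a fraction of $t$ bounded away from $1$ since $\kappa_m>1$) never reaches $\Delta_0$, so the three arcs bound a genuine region; this is at the same level of implicitness as the paper's own treatment and is easily checked.
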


\begin{proof}
  
Let $w$ denote the midpoint of the side opposite to $x$ in the triangle $\Delta_0(x,t)$. Let 
$$\Delta_0^{(1)}(x,t)=x+\frac13(\Delta_0(x,t)-x)$$
and
$$\Delta_0^{(2)}(x,t)=w+\frac13(\Delta_0(x,t)-w),$$
i.e. in the triangle $\Delta_0(x,t)$ we take two smaller triangles which are the shrunk images of $\Delta_0(x,t)$ by a factor of $1/3$ from $x$ and $w$.
The area of $\Delta_0^{(1)}$ and $\Delta_0^{(2)}$ is one-ninth of that of $\Delta_0$, respectively.

For every $Z_1\in\Delta_0^{(1)}$ and $Z_2\in\Delta_0^{(2)}$, it holds that $\widetilde{\Delta}(Z_1)\supset \widetilde{\Delta}(Z_2)$, therefore $\hat{A}(Z_1)>\hat{A}(Z_2)$. Let $\Delta^m=\widetilde{\Delta}(Z_1)\setminus \widetilde{\Delta}(Z_2)$. We need $A(\Delta^m)$.

Cut $\Delta^m$ by a segment trough $Z_1$ perpendicular to the line $H$, and denote the other intersection point of this segment with $\partial\Delta^m$ by $a$.
Then $d(Z_1,a)\approx t$.
Suppose that after the cut $Z_2$ is contained in the set that has $z_2$ on its boundary, see Figure \ref{abra:kulonbseg}.

Consider the Euclidean triangle $[Z_1,a,z_1]$, and let $\gamma$ denote the angle at the vertex $z_1$.
The radius of the circumscribed circle of this triangle is of order $\sqrt{t}$.
Thus, since the side opposite to the angle $\gamma$ has is of order $t$, the law of sines gives  that $\sin{\gamma}\approx\sqrt{t}$.
By the smallness of $t$, the angle $\gamma$ has the same order of magnitude as $\sin{\gamma}$.

After that, we translate $a$ along $\partial\Delta^m$ into a point $a'$ such that $d(z_1,a')=d(z_1,Z_1)$ holds.
(In case $a'$ has reached $Z_2$ and the distances are still not equal, we translate $Z_1$ closer to $z_1$.)
By this, for the angle at $z_1$ we have $\gamma'\geq\gamma$.

Since $d(z_1,a')=d(z_1,Z_1)$, the angle between the two circular arcs of radius $1$ corresponding to the two segments is $\gamma'$ as well, see Figure \ref{abra:korivek}.
By the angle between two circular arc we mean the angle between their tangent lines.

\begin{figure}
	\centering
	\includegraphics[scale=.75]{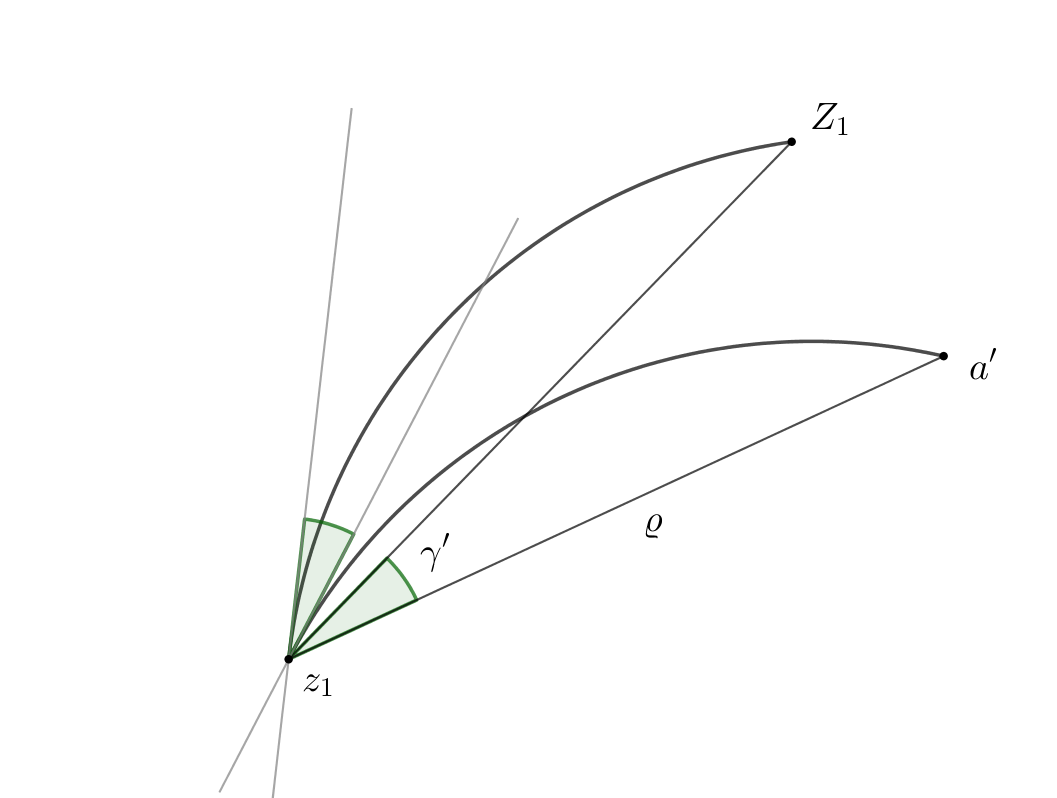}
	\caption{Angle of circular arcs and chords}	\label{abra:korivek}
\end{figure}

Consider the sector-like shape determined by $z_1,a'$ and $Z_1$, whose legs are circular arcs of radius $1$.
This is a part of a circular disc of radius $\varrho=d(z_1,Z_1)$, by which, rotating around the centre, we can cover the whole disc.
The area of a shape of this property is proportional to the central angle and the square of the radius.
Here we have $\varrho\approx\sqrt{t}$ and the order of magnitude of the angle is at least $\sqrt{t}$, therefore the area of this shape is at least of order $t^{3/2}$.

We have estimated the area $\hat{A}(Z_1)-\hat{A}(Z_2)$ we are looking for from below, examining its subset.
This gives us the following lower estimate:
\begin{equation}\label{ineq}
    \hat{A}(Z_1)-\hat{A}(Z_2)\gg t^{3/2}.
\end{equation}

Let $Z$, $Z'_1$ and $Z'_2$ be i.i.d uniform random points in $\Delta_0(x,t)$.
Using \eqref{ineq}, we obtain the desired lower bound:
\begin{align*}
    \Var(\hat{A}(Z))&=
    \Ex\left[\hat{A}(Z)^2\right]-\Ex\left[\hat{A}(Z)\right]^2=\\
    &=\Ex\left[\hat{A}(Z)^2\right]-\Ex\left[\hat{A}(Z'_1)\hat{A}(Z'_2)\right]=\\
    &=\frac12\Ex\left[\hat{A}(Z'_1)^2-2\hat{A}(Z'_1)\hat{A}(Z_2)+\hat{A}(Z'_2)^2\right]=\\
    &=\frac12\Ex\left[(\hat{A}(Z'_1)-\hat{A}(Z'_2))^2\right]\geq\\
    &\geq\frac12\Ex\left[\left(\hat{A}(Z'_1)-\hat{A}(Z'_2)\right)^2\indi(Z'_1\in\Delta_1,Z'_2\in\Delta_2)\right]\gg\\ 
    &\gg t^3 \Ex\left[\indi(Z'_1\in\Delta_1,Z'_2\in\Delta_2)\right]\gg t^3.
\end{align*} 
\end{proof}

We may assume that $n>n_0$ for some suitable $n_0$, since it is sufficient to prove the lower bound of variance for large $n$.
In the following, we consider disc-caps of height $t_n$ for 
\begin{equation}\label{tn-ek}
    t_n=n^{-\frac23}.
\end{equation}
Choose a maximal set of points $y_1,\ldots, y_m$ on $\partial K$ such that $|x_i-x_j|\geq 2\sqrt{c_2}\sqrt{t_n}$ for any $i,j\in \{1,\ldots, m\}$ for some constant $c_2$ that we specify later. 
Then 
\begin{equation}\label{m}
    m\gg n^{\frac13}.
\end{equation}

Consider the pairwise disjoint disc-caps $D(y_j,t_n)$ and the previously defined triangles $\Delta(y_j,t_n)$ in them, for $j\in\left[m\right]$.
For each $\Delta(y_j,t_n)$, also construct the small triangles $\Delta_i(y_j,t_n)$, for $i=0,1,2$.
Let $E_j$ be the event that each of the small triangles $\Delta_i(y_j,t_n)$ contains exactly one of the random points $x_1,\dots,x_n$ and that $D(y_j,c_2t_n)$ contains no other random point.
By \eqref{Vandellestimate2} we have
$$A(D(y_j,c_2t_n))\ll\frac1n,$$
and for $i=0,1,2$
$$A(\Delta_i(y_j,t_n))\gg\frac1n.$$

We have for every	$j\in\left[m\right]$
\begin{equation*}\label{valseg}
	\pr(E_j)\gg\binom{n}{3}\left(\frac1n\right)^3\left(1-\frac1n\right)^{n-3}\gg1,
\end{equation*}
thus
\begin{equation}\label{expectation}
	\Ex\left(\sum_{j=1}^{m}\indi(E_j)\right)=\sum_{j=1}^{m}\pr(E_j)\gg m.
\end{equation}

In the case the event $E_j$ occurs, let the random point in $\Delta_0(y_j,t_n)$ be denoted by $Z_j$.

\begin{lemma}\label{fuggetlen}
Assume that $J\subset\left[m\right]$ and $E_j$ occurs for every $j\in J$. Then

\begin{equation*}
    A(K_n)=A\left(\conv_r\left(X_n\setminus\{Z_1,\dots Z_j\}\right)\right)+\sum_{j\in J}\hat{A}(Z_j).
\end{equation*}
	\end{lemma}

\begin{proof}
	
Our goal is to show that if for indices	$j,k\in J$, $j\neq k$, $Z_j$ and $Z_k$ are the random points in triangles $\Delta_0(y_j,t_n)$ and $\Delta_0(y_k,t_n)$, then $Z_j$ and $Z_k$ are vertices of $K_n$ and there is no edge between them. This means that the contributions of $Z_i$ and $Z_j$ to the area of $K_n$ are geometrically "independent". 

For this, we need that for every $j$, the circular arc of radius $1$ determined by the two random points in triangles $\Delta_0(y_j,t_n)$ and $\Delta_i(y_j,t_n)$ ($i\in\{1,2\}$), meets the boundary of $K$ without intersecting any other disc-cap $D(y_k,t_n)$.

For simplicity take a fixed disc-cap of height $t$ and vertex $x$, and the triangles $\Delta_0$ and $\Delta_1$ in it.
Orient the cap in such a way that the outer normal at $x$ points in the positive direction of the y-axis.
The intersection with $\partial K$ has minimal $y$-coordinate in case the random point in $\Delta_0$ lies at the bottom corner, nearest to $\Delta_1$, and the random point in $\Delta_1$ is in the point farthest from the boundary, see Figure \ref{abra:simulokorok}.
Let us denote these points by $a_0$ and $a_1$, respectively.

In the case when $K$ is a circle of radius $r$, we can exactly compute the $y$-coordinate of the intersection for a fixed $r$. 
The depth will be smaller than $c_3t$ for a suitable constant $c_3$ depending only on $K$. 
This $c_3$ is bounded as $r$ is strictly smaller than $1$.
Now we can specify the constant $c_2$ to be as large such that the  disc-caps $D(y_k,t_n)$ are far apart enough and the observed intersection point is not contained in any other disc-cap.
The constant $c_2$ depends only on $K$.
Therefore the statement of the Lemma is true for circles.

For a general convex disc $K$ with $C^2_+$ boundary, we estimate the $y$-coordinate of the intersection point as follows.
Consider the osculating circle of $K$ at the point $x$ with radius $(R_0(x)=)R_0<1$.
There exists an $\varepsilon>0$, such that in any neighbourhood of radius less than $\varepsilon$ of $x$, for the circles of radii $R_0+\varepsilon$ and $R_0-\varepsilon$ having the same tangent line as $K$ at $x$, it is true that $K$ is locally inside of the larger circle and the smaller circle is inside of $K$.

The line $H(x,t)$ meets these circles in $p_1,p_2$ and $q_1,q_2$, where the points with the same indices are close to each other.
Then for $i=1,2$, 
\begin{align}
	d(p_i,q_i)&=\sqrt{(2R_0+2\varepsilon-t)t}-\sqrt{(2R_0-2\varepsilon-t)t} \nonumber \\
	&=\sqrt{t}\left(\sqrt{2R_0+2\varepsilon-t}-\sqrt{2R_0-2\varepsilon-t}\right) \nonumber\\
	&=\sqrt{t}\left(\frac{4\varepsilon}{\sqrt{2R_0+2\varepsilon-t}+\sqrt{2R_0-2\varepsilon-t}}\right). \label{kor-tavolsag}
	\end{align}

Since $t\to0$, we may assume that $t<\varepsilon$, thus we can estimate \eqref{kor-tavolsag} from above as follows.
\begin{equation*}
\sqrt{t}\left(\frac{4\varepsilon}{\sqrt{2R_0+2\varepsilon-t}+\sqrt{2R_0-2\varepsilon-t}}\right)
\leq \frac{4}{\sqrt{2R_0}}\varepsilon\sqrt{t}\leq c_4\varepsilon\sqrt{t},
\end{equation*}
where $c_4=\max_{x\in\partial K}4/\sqrt{2R_0(x)}$.

\begin{figure}
	\centering
	\includegraphics[width=.8\textwidth]{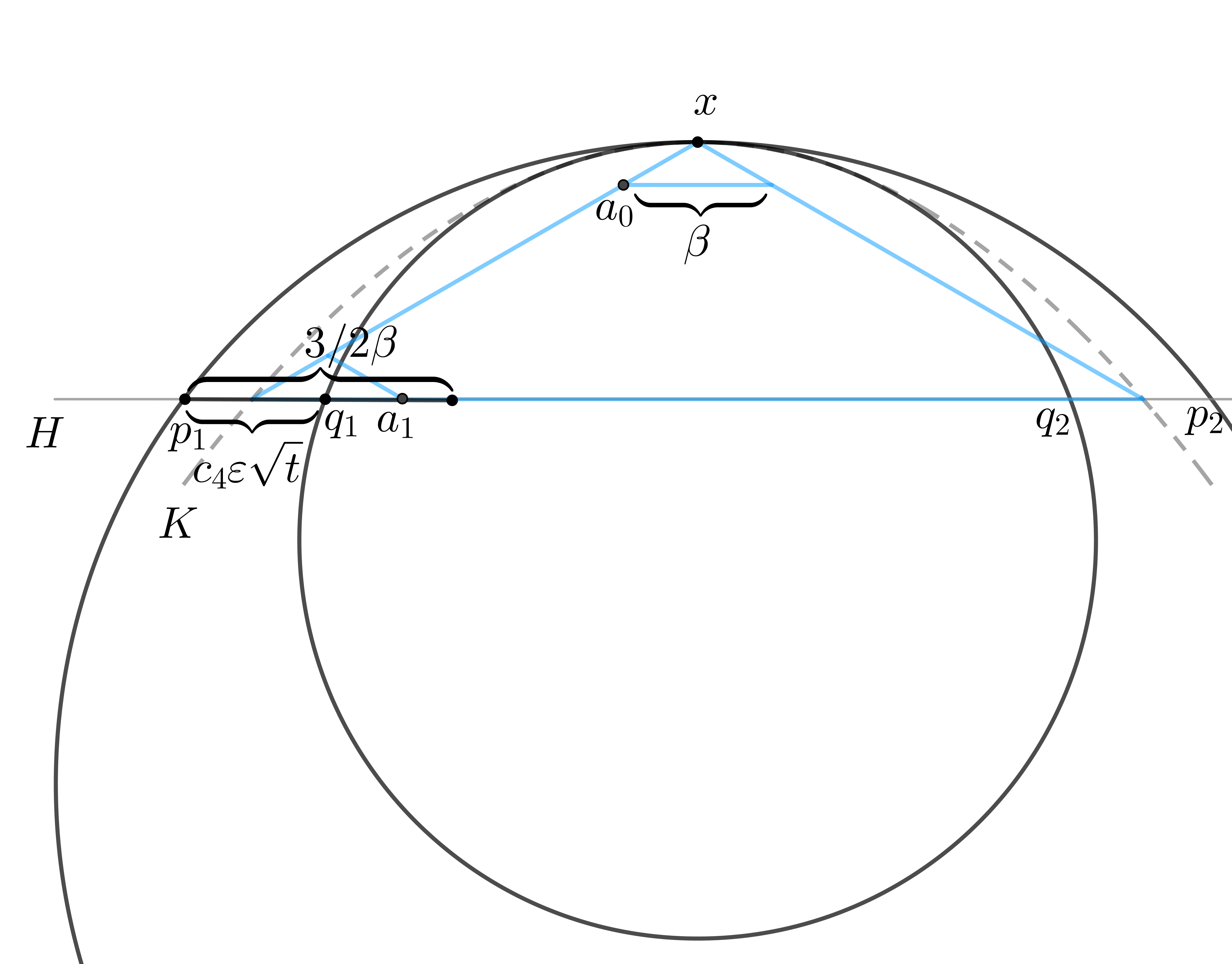}
	\caption{$K$ bounded between circles}\label{abra:simulokorok}
\end{figure}

Let $\beta$ denote the length of the side of $\Delta_0$ opposite to $x$.
Then the distance between the intersection of $\partial K$ with the line $H$ and $a_1$ is also $\beta$.
The order of magnitude of $\beta$ is $\sqrt{t}$.
We may assume that $\beta=c_5\sqrt{t}$ for some constant $c_5$.

Consider the unique point of $H\cap K$ which is at distance $3/2\beta$ from $p_1$.
The circular arc of radius $1$ determined by this point and $a_0$, meets the outer circle at a computable, bounded depth depending on $(R_0+\varepsilon)$.
In the case $c_4\cdot\varepsilon<c_5/2$, the circular arc incident with $a_0$ and $a_1$ meets the outer circle less deeply.
For this, we need that
\begin{equation*}
	\varepsilon<\frac{c_5}{2c_4}.
\end{equation*}

Note that constant on the right-hand side of the inequality depends only on $K$.
Since $t\to0$, $\varepsilon$ can be chosen smaller than that. The only restriction on $\varepsilon$ is that $K$ has to be locally between the two circles, that is, the intersection point $c_6t$ deep is in this range.
Therefore, we can choose $\varepsilon$ arbitrary small.


Thus for every $K$, the circular arc determined by the points $a_0$ and $a_1$ meets the boundary of $K$ in a depth of order $t$.
Therefore this arc will not intersect any other disc-cap $D(y_j,t_n)$.
\end{proof}

Let $\mathcal{F}$ denote the $\sigma$-algebra generated by the events $E_j$.
Consider the conditional variance on $\mathcal{F}$.
By the Law of Total Variance,
\begin{align}
	\Var A(K_n)&=\Ex\Var(A(K_n)\mid\F)+\Var\Ex(A(K_n)\mid\F)\nonumber\\
	&\geq \Ex\Var(A(K_n)\mid\F). \label{total-var}
\end{align}
	
For the set of indices $J\subseteq\left[m\right]$, let $E(J)$ be the event that the event $E_j$ occurs for exactly the indices $j\in J$, and does not occur for the other indices in $[m]$.
Then the conditional variance on $\mathcal{F}$ can be expanded in the following form:
\begin{equation}\label{osszeg}
	\Var(A(K_n)\mid\F)=\sum_{J\subseteq\left[m\right]}\Var(A(K_n)\mid E(J))\cdot\indi(E(J)).
\end{equation}

By Lemma \ref{fuggetlen}, in case the event $E(J)$ occurs, the area of $K_n$ can be written as the sum of $\hat{A}(Z_j)$ and the area of the hull of the remaining points:
\begin{equation}\label{osszeg-var}
    \Var(A(K_n)\mid E(J))=\Var\left[ A\left(\conv_r\left(X_n\setminus\{Z_1,\dots Z_j\}\right)\right)+\sum_{j\in J}\hat{A}(Z_j)\mid E(J)\right].
\end{equation}
Let the points of $X_n\setminus\{Z_1,\dots Z_j\}$ be fixed.
Then the first term on the right-hand side of \eqref{osszeg-var}, the area of the $r$-hull of the remaining points is constant, thus we can omit it.
The other terms are independent since the contributions of the points $Z_j$ are geometrically independent, therefore we can take the variance term-by-term.

This holds for every fixed set of points $X_n\setminus\{Z_1,\dots Z_j\}$, thus we have the following inequality. The variance $\Var_{Z_j}$ means that the variance is taken only for the corresponding variable.
\begin{equation*}
    \Var(A(K_n)\mid E(J))\geq \sum_{j\in J}\Var_{Z_j} \left(\hat{A}(Z_j)\mid E_j\right),
\end{equation*}

Substituting it in \eqref{osszeg}, we obtain that
\begin{equation}\label{dupla-szumma}
\sum_{J\subseteq\left[m\right]}\Var(A(K_n)\mid E(J))\cdot\indi(E(J))\geq
\sum_{J\subseteq\left[m\right]}\indi(E(J))\sum_{j\in J}\Var_{Z_j}\left(\hat{A}(Z_j)\mid E_j\right).
\end{equation}
Rearrange the right-hand side of \eqref{dupla-szumma} such that the sum is according to the index of $\Var_{Z_j}\left(\hat{A}(Z_j)\mid E_j\right)$:
\begin{align*}
	\sum_{J\subseteq\left[m\right]}\indi(E(J))\sum_{j\in J}\Var_{Z_j}\left(\hat{A}(Z_j)\mid E_j\right) &=\sum_{j=1}^{m}\Var_{Z_j}\left(\hat{A}(Z_j)\mid E_j\right)\sum_{\{J:j\in J\}}\indi(E(J))=\\
	&=\sum_{j=1}^{m}\Var_{Z_j}\left(\hat{A}(Z_j)\mid E_j\right)\indi(E_j).
\end{align*}

Thus, we have	
\begin{equation}
	\Var(A(K_n)\mid\F)\geq\sum_{j=1}^{m}\Var_{Z_j}\left(\hat{A}(Z_j)\mid E_j\right)\indi(E_j).
	\label{var-szetbontasa}
\end{equation}
	
By Lemma \ref{sapka-szorasok} together with \eqref{tn-ek}-\eqref{expectation}, \eqref{total-var}, and \eqref{var-szetbontasa},
\begin{align*}
	\Var A(K_n)&\geq\Ex\left(\sum_{j=1}^{m}\Var_{Z_j}\left(\hat{A}(Z_j)\mid E_j\right)\indi(E_j)\right)
	\gg\Ex\left(\sum_{j=1}^{m}t_n^3\indi(E_j)\right)\gg\\
	&\gg n^{-2}\left(\Ex\sum_{j=1}^m\indi(E_j)\right)
	\gg n^{-2}m\gg n^{-\frac53},
\end{align*}
which is \eqref{area-variance-a} of Theorem \ref{thm:also becsles}, thus we have finished the proof.

\section{The variance of the number of vertices}\label{proof-vertex}

We give the outline of the necessary changes in the previous argument to prove \eqref{vertex-variance-a} of Theorem \ref{thm:also becsles}, the lower bound of the variance of the number of vertices, see also Reitzner \cite{R05b}.

Let the disc-caps $D(y_j,t_n)$ be defined as before, and also the triangles $\Delta(y_n,t_n)$ together with the small triangles $\Delta_i(y_n,t_n)$, where $j\in\left[m\right]$ and $i\in\{0,1,2\}$.
Let $F_j$ denote the event that $\Delta_1(y_n,t_n)$ and $\Delta_2(y_n,t_n)$ each contain exactly one of the random points $x_1,\dots,x_n$, $\Delta_0(y_n,t_n)$ contains exactly two of $x_1,\dots,x_n$, and there is no other random point in the disc-cap $D(y_j,c_2t_n)$.
The probability of $F_j$ is
\begin{equation*}\label{fvalseg}
	\pr(F_j)\gg\binom{n}{4}\left(\frac1n\right)^4\left(1-\frac1n\right)^{n-4}\gg1,
\end{equation*}
thus, similar to \eqref{expectation}, we have
\begin{equation}\label{expectf}
	\Ex\left(\sum_{j=1}^{m}\indi(F_j)\right)\gg m.
\end{equation}

If the event $F_j$ occurs, denote the random points in $\Delta_0(y_j,t_n)$ by $Y_j$ and $Z_j$.
It follows from the proof of Lemma \ref{fuggetlen}, that in this case each one of the triangles $\Delta_i(y_n,t_n)$ $(i\in\{0,1,2\})$ contains a vertex of $K_n$.
Either only one of the points $Y_j$ and $Z_j$ is a vertex of the $K_n$, or both of them are.
Therefore the disc-cap $D(y_j,t_n)$ contains either $3$ or $4$ vertices, both of these events have positive probability.
It follows, taking the variance for only these two points, that
\begin{equation}\label{csszoras}
    \Var_{Y_j,Z_j}\left(\hat{f_0}(\left[Y_j,Z_j,z_{j,1},z_{j,2}\right])\mid F_j\right)\gg1,
\end{equation}
where $\hat{f_0}(\left[Y_j,Z_j,z_{j,1},z_{j,2}\right]$ denotes the number of vertices contained in the $j$-th disc-cap.

Let $\mathcal{G}$ be the $\sigma$-algebra generated by the events $F_j$.
Similar to the proof of Lemma \ref{fuggetlen}, it holds here as well, that the number of vertices in one of the disc-caps does not affect how many vertices are there in an other disc-cap $D(y_j, t_n)$.
Using this fact, together with \eqref{m},\eqref{total-var} and \eqref{expectf}-\eqref{csszoras}, we get
\begin{align*}
    \Var f_0(K_n)&\geq \Ex \Var(f_0(K_n)\mid \mathcal{G})\\
    &\geq\Ex\left(\sum_{j=1}^m\Var_{Y_j,Z_j}\left(\hat{f_0}(\left[Y_j,Z_j,z_{j,1},z_{j,2}\right])\mid F_j\right)\indi(F_j)\right)\\
    &\gg\Ex\left(\sum_{j=1}^m\indi(F_j)\right)\gg m\gg n^{\frac13},
\end{align*}
which is the statement of \eqref{vertex-variance-a} of Theorem \ref{thm:also becsles}, thus we have finished the proof.

\section{Acknowledgments}
The first author was partially supported by Hungarian NKFIH grant K134814. The third author was partially supported by  Hungarian NKFIH grant FK135392. 

This research was supported by grant NKFIH-1279-2/2020 of the Ministry for Innovation and Technology, Hungary.

\begin{bibdiv}
	\begin{biblist}
		
		\bib{BBS06}{article}{
			author = {Baddeley, A.}, 
			author={B\'{a}r\'{a}ny, I.},
			author={Schneider, R.},
			year = {2006},
			month = {01},
			pages = {77-118},
			title = {Random Polytopes, Convex Bodies, and Approximation},
			volume = {1892},
			isbn = {978-3-540-38174-7},
			journal = {Lecture Notes in Mathematics},
		}
		
		\bib{B08}{article}{
			author = {B\'{a}r\'{a}ny, Imre},
			year = {2008},
			month = {07},
			pages = {339-365},
			title = {Random points and lattice points in convex bodies},
			volume = {45},
			journal = {Bulletin of The American Mathematical Society},
		}

\bib{BS13}{article}{
	author={B\'{a}r\'{a}ny, Imre},
	author={Steiger, William},
	title={On the variance of random polygons},
	journal={Comput. Geom.},
	volume={46},
	date={2013},
	number={2},
	pages={173--180},
	issn={0925-7721},
}
		
	\bib{BR10}{article}{
		author={B\'{a}r\'{a}ny, Imre},
		author={Reitzner, Matthias},
		title={On the variance of random polytopes},
		journal={Adv. Math.},
		volume={225},
		date={2010},
		number={4},
		pages={1986--2001},
		issn={0001-8708},
	}

\bib{BRT21+}{article}{
	author={Besau, Florian},
	author={Rosen, Daniel},
	author={Th\"ale, Christoph},
	title={Random inscribed polytopes in projective geometries},
	journal={arXiv:2005.10502},
	date={2021},
	}
		
		\bib{BLNP05}{article}{
		author={Bezdek, K\'{a}roly},
		author={L\'{a}ngi, Zsolt},
		author={Nasz\'{o}di, M\'{a}rton},
		author={Papez, Peter},
		title={Ball-polyhedra},
		journal={Discrete Comput. Geom.},
		volume={38},
		date={2007},
		number={2},
		pages={201--230},
		issn={0179-5376},
	}

\bib{BFRV09}{article}{
	author={B\"{o}r\"{o}czky, K. J.},
	author={Fodor, F.},
	author={Reitzner, M.},
	author={V\'{\i}gh, V.},
	title={Mean width of random polytopes in a reasonably smooth convex body},
	journal={J. Multivariate Anal.},
	volume={100},
	date={2009},
	number={10},
	pages={2287--2295},
	issn={0047-259X},
}		
		
		\bib{FKV14}{article}{
			author={Fodor, F.},
			author={Kevei, P.},
			author={V\'{\i}gh, V.},
			title={On random disc polygons in smooth convex discs},
			journal={Adv. in Appl. Probab.},
			volume={46},
			date={2014},
			number={4},
			pages={899--918},
			issn={0001-8678},
		}
		
		\bib{FKV21+}{article}{
			author={Fodor, F.},
			author={Kevei, P.},
			author={V\'{\i}gh, V.},
			title={On random disc polygons in a disc-polygon},
			date={2021},
			status={in preparation},
		}
		
		\bib{FV18}{article}{
			author={Fodor, Ferenc},
			author={V\'{\i}gh, Viktor},
			title={Variance estimates for random disc-polygons in smooth convex
				discs},
			journal={J. Appl. Probab.},
			volume={55},
			date={2018},
			number={4},
			pages={1143--1157},
			issn={0021-9002},
		}
	
	\bib{G07}{book}{
		author={Gruber, Peter M.},
		title={Convex and discrete geometry},
		series={Grundlehren der Mathematischen Wissenschaften [Fundamental
			Principles of Mathematical Sciences]},
		volume={336},
		publisher={Springer, Berlin},
		date={2007},
	}
	
	\bib{G21}{thesis}{
author={Gr\"unfelder, Bal\'azs},
title={Variance estimates for generalized random polygons},
language={Hungarian},
date={2021},	
type={MSc thesis},
institution={University of Szeged, Hungary},
note={Also in the student competition paper "The variance of the area of random disc-polygons"},
}

\bib{R10}{article}{
	author={Reitzner, Matthias},
	title={Random polytopes},
	conference={
		title={New perspectives in stochastic geometry},
	},
	book={
		publisher={Oxford Univ. Press, Oxford},
	},
	date={2010},
	pages={45--76},
}
	
\bib{R03}{article}{
	author = {Reitzner, Matthias},
	year = {2003},
	month = {10},
	pages = {2136--2166},
	title = {Random polytopes and the Efron--Stein jackknife inequality},
	volume = {31},
	journal = {Ann. Probab.},
	}

\bib{R05a}{article}{
	author={Reitzner, Matthias},
	title={The combinatorial structure of random polytopes},
	journal={Adv. Math.},
	volume={191},
	date={2005},
	number={1},
	pages={178--208},
	issn={0001-8708},
}
	
\bib{R05b}{article}{
	author={Reitzner, Matthias},
	title={Central limit theorems for random polytopes},
	journal={Probab. Theory Related Fields},
	volume={133},
	date={2005},
	number={4},
	pages={483--507},
	issn={0178-8051},
	}

\bib{RS63}{article}{
	author={R\'{e}nyi, A.},
	author={Sulanke, R.},
	title={Zuf\"{a}llige konvexe Polygone in einem Ringgebiet},
	language={German},
	journal={Z. Wahrscheinlichkeitstheorie und Verw. Gebiete},
	volume={9},
	date={1968},
	pages={146--157},
}

\bib{RS64}{article}{
	author={R\'{e}nyi, A.},
	author={Sulanke, R.},
	title={\"{U}ber die konvexe H\"{u}lle von $n$ zuf\"{a}llig gew\"{a}hlten Punkten. II},
	language={German},
	journal={Z. Wahrscheinlichkeitstheorie und Verw. Gebiete},
	volume={3},
	date={1964},
	pages={138--147 (1964)},
}

\bib{RS68}{article}{
	author={R\'{e}nyi, A.},
	author={Sulanke, R.},
	title={\"{U}ber die konvexe H\"{u}lle von $n$ zuf\"{a}llig gew\"{a}hlten Punkten},
	language={German},
	journal={Z. Wahrscheinlichkeitstheorie und Verw. Gebiete},
	volume={2},
	date={1963},
	pages={75--84 (1963)},
}

\bib{Sch14}{book}{
	author={Schneider, Rolf},
	title={Convex bodies: the Brunn-Minkowski theory},
	series={Encyclopedia of Mathematics and its Applications},
	volume={151},
	edition={Second expanded edition},
	publisher={Cambridge University Press, Cambridge},
	date={2014},
}

\bib{Sch17}{article}{
	author={Schneider, Rolf},
	title={Discrete aspects of stochastic geometry},
	journal={Handbook of Discrete and Computational Geometry},
	edition={3rd ed.},
	date={2017},
	publisher={CRC Press, Boca Raton},
	pages={299--329},
}

\bib{TW18}{article}{
	author={Turchi, N.},
	author={Wespi, F.},
	title={Limit theorems for random polytopes with vertices on convex
		surfaces},
	journal={Adv. in Appl. Probab.},
	volume={50},
	date={2018},
	number={4},
	pages={1227--1245},
	issn={0001-8678},
}

\bib{TTW18}{article}{
	author={Th\"{a}le, Christoph},
	author={Turchi, Nicola},
	author={Wespi, Florian},
	title={Random polytopes: central limit theorems for intrinsic volumes},
	journal={Proc. Amer. Math. Soc.},
	volume={146},
	date={2018},
	number={7},
	pages={3063--3071},
	issn={0002-9939},
}
		
	\end{biblist}	
\end{bibdiv}

\end{document}